\newtheorem{thm}{Theorem}
\newtheorem{lem}{Lemma}
\newtheorem*{ack}{Acknowledgements}
\numberwithin{equation}{section}
\begin{document}

\title{The saturation number for Cayley's cubic surface}

\author{Yuchao Wang}

\address{
School of Mathematics, Shandong University, Jinan, 250100, China}

\email{yuchaowang@mail.sdu.edu.cn}

\begin{abstract}
We investigate the density of rational points on Cayley's cubic surface whose coordinates have few prime factors. The key tools used are the circle method and universal torsors.
\end{abstract}
\keywords{universal torsors, circle method}

\subjclass[2010]{11D25, 11P32, 11P55}

\maketitle

\section{Introduction}
Cayley's cubic surface is defined in $\mathbb{P}^3$ by the equation
\begin{equation*}
S_0:\ x_2x_3x_4+x_1x_3x_4+x_1x_2x_4+x_1x_2x_3=0.
\end{equation*}
It has singularity type $4\mathbf{A}_1$. Moreover, there are 9 lines in the surface, three of which have the form $x_i+x_j=x_k+x_l=0$, and the remaining six have the shape $x_i=x_j=0$. We shall write $U_0$ for the complement of these lines in the surface $S_0$.

Heath-Brown \cite{Hb} considered the density of rational points on Cayley's cubic surface. Let $\mathbb{Z}^4_{\text{prim}}$ be the set of vectors $\mathbf{x}=(x_1,x_2,x_3,x_4)\in\mathbb{Z}^4$ with $\text{gcd}(x_1,x_2,x_3,x_4)=1$. Then we set
\begin{equation*}
N(B)=\#\{\mathbf{x}\in \mathbb{Z}^4_{\text{prim}}:\ [\mathbf{x}]\in U_0, \max |x_i|\leq B \}.
\end{equation*}
Manin (see Batyrev and Manin \cite{BM}) has given a very general conjecture which would predict in this case that there is a suitable positive constant $c$ such that $N(B)\sim cB(\log B)^6$, as $B\rightarrow \infty$. Heath-Brown \cite{Hb} showed that
\begin{equation*}
B(\log B)^6\ll N(B)\ll B(\log B)^6.
\end{equation*}
The approach combines analytic methods with the theory of universal torsors. For details of universal torsors for singular del Pezzo surfaces, we refer the reader to Derenthal \cite{D}.

In this paper we are concerned with the almost prime integral points on Cayley's cubic surface. Let $P_r$ indicate an $r$-almost prime, which is a number with at most $r$ prime factors, counted with multiplicity. For any cubic surface $S$, we write $U$ for the complement of the lines in the surface $S$. Then we define the saturation number $r(S)$ to be the least number $r$ such that the set of $\mathbf{x}\in \mathbb{Z}^4_{\text{prim}}$ for which $[\mathbf{x}]\in U$ and $x_1x_2x_3x_4=P_r$ , is Zariski dense in $U$. The main aim of this paper is to show that $r(S_0)<\infty$.

Bourgain, Gamburd and Sarnak \cite{BGS} and Nevo and Sarnak \cite{NS} established upper bounds for saturation numbers for orbits of congruence subgroups of semi-simple groups acting linearly on affine space. Moreover, Liu and Sarnak \cite{LS} considered the saturation number for certain affine quadric surfaces. Note that Derenthal and Loughran \cite{DL,DL2} have shown that Cayley's cubic surface is not an equivariant compactification of a linear algebraic group. Thus the methods of \cite{BGS} or \cite{NS} do not apply to $S_0$.

To prove that a set of $[\mathbf{x}]\in U_0$ is Zariski dense, it suffices to prove that given $\varepsilon>0$ and any $\boldsymbol{\xi}\in\mathbb{R}^4$ satisfying $[\boldsymbol{\xi}]\in U_0$, there exists $B\in \mathbb{N}$ sufficiently large and at least one point $[\mathbf{x}]$ in the set, such that
\begin{equation*}
\left|\frac{\mathbf{x}}{B}-\boldsymbol{\xi}\right|< \varepsilon.
\end{equation*}
We will prove the following.
\begin{thm}
Let $[\boldsymbol{\xi}]\in U_0$ and define
\begin{equation*}
\begin{split}
M_{U_0}(\boldsymbol{\xi},B,r)=\# \{&\mathbf{x}\in\mathbb{Z}^4_{\text{prim}}:\ [\mathbf{x}]\in U_0,\  \left|\frac{\mathbf{x}}{B}-\boldsymbol{\xi}\right|\ll (\log B)^{-1},\\
&\ x_1x_2x_3 x_4=P_r\}.
\end{split}
\end{equation*}
Then for sufficiently large $B$, we have
\begin{equation*}
M_{U_0}(\boldsymbol{\xi},B,12)\gg B(\log B)^{-7}.
\end{equation*}
The implicit constants are allowed to depend on $\boldsymbol{\xi}$. In particular, the saturation number satisfies $r(S_0)\leq 12$.
\end{thm}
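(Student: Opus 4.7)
The plan is to follow the pattern of Heath-Brown's treatment of $N(B)$: pass to a universal torsor parametrization, reduce the torsor equation to a linear problem amenable to the circle method, and then feed the resulting asymptotic (with congruence conditions) into a weighted sieve of dimension four applied to the product $x_1x_2x_3x_4$. Concretely, I would first use the Cox ring / universal torsor description of the minimal desingularization of $S_0$, as worked out by Derenthal, to rewrite the count of $[\mathbf{x}]\in U_0$ with $|\mathbf{x}/B-\boldsymbol{\xi}|\ll(\log B)^{-1}$ as a count of coprime integer tuples $(y_\ell)$ lying in prescribed dyadic boxes (dictated by $\boldsymbol{\xi}$) and satisfying the torsor relations, with each coordinate $x_i$ expressed as an explicit monomial in the $y_\ell$.

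The second step is to count such $(y_\ell)$ subject to the additional congruence $d\mid x_1x_2x_3x_4$, uniformly in $d$. Since the residual torsor relation is linear in (at least) one variable, one can detect it by Fourier analysis on a small circle, or by direct lattice-point counting after fixing all but one $y_\ell$, obtaining a main term whose dependence on $d$ factors through a multiplicative function of the expected type, together with an error that should be negligible provided $d\leq B^{\theta}$ for some absolute $\theta>0$. Establishing such a level of distribution, uniformly in $d$ and compatible with the narrow box condition (which leaves only logarithmic room for losses), is the principal analytic obstacle; it presumably requires a careful minor-arc estimate of Heath-Brown type and a smoothed replacement of the indicator of the small box, so that the major-arc main term dominates comfortably.

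With a level of distribution in hand, the final ingredient is a weighted sieve of dimension $\kappa=4$, since $f(\mathbf{x})=x_1x_2x_3x_4$ has $\omega(p)=4+O(1/p)$ roots modulo $p$; for instance the Diamond--Halberstam--Richert weighted sieve or a Greaves variant. The appearance of $r=12$ in the theorem reflects the numerical value obtained by matching the attainable $\theta$ against the standard tables for $\kappa=4$, and the lower bound $\gg B(\log B)^{-7}$ then emerges by tracking constants: the small-box torsor main term is of size $B(\log B)^{c}$ for an appropriate $c$, the sieve applied to a product in four variables costs a power $(\log B)^{-\kappa'}$, and the difference yields the stated exponent. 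Because $[\boldsymbol{\xi}]\in U_0$, the singular series and singular integral in the circle-method output are both strictly positive at $\boldsymbol{\xi}$, and a positive proportion of the $P_{12}$-points produced lie genuinely on $U_0$ rather than on the nine exceptional lines, so the Zariski-density criterion recalled in the introduction gives $r(S_0)\leq 12$.
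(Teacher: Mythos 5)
Your proposal takes a genuinely different route from the paper, and as sketched it also has unfilled gaps that would be hard to close. The paper does not sieve at all: the key move is to \emph{specialize} the torsor rather than count on all of it. In Lemma 3 one sets all $z_{ij}=1$, so the torsor relation collapses to $y_1+y_2+y_3+y_4=0$ with $\gcd(y_i,y_j)=1$, and $x_i=\prod_{j\neq i}y_j$. One then takes $y_j=\beta_j p_j$ with $p_j$ prime and lying in short intervals $I_j$ of length $B^{1/3}(\log B)^{-1}$ about $\eta_j B^{1/3}$ (the $\eta_j,\beta_j$ being determined by $\boldsymbol{\xi}$). Each $x_i$ is then automatically a $P_3$, so $x_1x_2x_3x_4=\pm(p_1p_2p_3p_4)^3$ is a $P_{12}$; the number $12$ is the purely combinatorial count $4\times 3$, not the output of a sieve table. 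The circle method (Lemma 2) is applied only to the single linear equation $\beta_1 p_1+\beta_2 p_2+\beta_3 p_3+\beta_4 p_4=0$ in four prime variables in short intervals, using Perelli--Pintz on the major arcs and Zhan's short-interval exponential sum bound on the minor arcs, giving $R(B)\gg B(\log B)^{-3}$ with weights $\prod\log p_j\asymp(\log B)^4$; removing the weights yields the stated $\gg B(\log B)^{-7}$. Coprimality $\gcd(y_i,y_j)=1$ costs only $O(B^{2/3}(\log B)^{-2})$ exceptional solutions with a repeated prime.

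Your approach instead keeps the full torsor and tries to run a dimension-$4$ weighted sieve on $x_1x_2x_3x_4$ after establishing a level of distribution for $d\mid x_1x_2x_3x_4$. This is plausible in spirit (it is the Liu--Sarnak paradigm) but the gaps you flag are the whole difficulty, and they are not minor. In the narrow box $|\mathbf{x}/B-\boldsymbol{\xi}|\ll(\log B)^{-1}$ the unsieved torsor count is already a power of $\log$ smaller than $B(\log B)^6$, so the error term in the level-of-distribution estimate must be controlled uniformly in $d$ to within several powers of $\log$ of the main term; obtaining any positive exponent of distribution $\theta$ in this regime is not established. Moreover the assertion that matching the attainable $\theta$ against standard $\kappa=4$ tables yields precisely $r=12$ is unverified, and even if it held, tracking the logarithmic losses to arrive at the exponent $-7$ is not carried out. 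The paper sidesteps all of this: by sacrificing most of the torsor (setting $z_{ij}=1$) it converts the problem into a clean ternary-Goldbach-type prime equation in short intervals, for which the tools already exist, and obtains $r=12$ for free from the monomial structure. Your route, if it could be made to work, would have the advantage of potentially giving a smaller $r$, but that is far from clear; the paper's route is both simpler and complete.
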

Define $\tilde{r}(S_0)$ to be the least number $\tilde{r}$ such that the set of $\mathbf{x}\in \mathbb{Z}^4_{\text{prim}}$ for which $[\mathbf{x}]\in U_0$ and the product $x_1x_2x_3x_4$ has at most $\tilde{r}$ distinct prime factors, is Zariski dense in $U_0$. Then it is worth pointing out that our methods give $\tilde{r}(S_0)\leq 4$.

The idea of the proof is to apply the theory of universal torsors to specify some integral solutions in a particular form, whose coordinates have few prime factors, and then give a lower bound for the number of such solutions which are close to some fixed real solution via the circle method. More precisely, we use the circle method to count the number of solutions to the equation
\begin{equation*}
\beta_1p_1+\beta_2p_2+\beta_3p_3+\beta_4p_4=0,
\end{equation*}
where $p_i$ are primes which lie in certain intervals and $\beta_i\in\{1,-1\}$, for $i=1,\dots,4$.

It seems likely that similar methods will apply to several other singular cubic surfaces. Let $S_1\subset \mathbb{P}^3$ be the cubic surface given by the equation
\begin{equation*}
x_1x_2x_3=x_4(x_1+x_2+x_3)^2.
\end{equation*}
There is a unique singular point which is of type $D_4$. The density of rational points of bounded height on $S_1$ has been studied by Browning \cite{Br} and Le Boudec \cite{Le}. Arguing similarly as in this paper, we can show that $r(S_1)\leq 12$.
\begin{ack}
\emph{The author wishes to express his sincere appreciation to Tim Browning for introducing him to this problem and giving him various suggestions. This work was carried out while the author was a visiting PhD student at University of Bristol. The author is grateful for the hospitality. While working on this paper the author was supported by the China Scholarship Council.}
\end{ack}
\section{The circle method}
Suppose
\begin{equation*}
F(t_1,t_2,t_3,t_4)=\beta_1t_1+\beta_2t_2+\beta_3t_3+\beta_4t_4
\end{equation*}
is a linear form, with the coefficients $\beta_i\in \{-1,1\}$.
Suppose $\eta_1$, $\dots$, $\eta_4$ are fixed positive real numbers, satisfying
\begin{equation*}
F(\eta_1,\eta_2,\eta_3,\eta_4)=\beta_1\eta_1+\beta_2\eta_2+\beta_3\eta_3+\beta_4\eta_4=0.
\end{equation*}
Write
\begin{equation*}
I_j=[\eta_j B^{\frac{1}{3}}-B^{\frac{1}{3}}(\log B)^{-1},\eta_jB^{\frac{1}{3}}+B^{\frac{1}{3}}(\log B)^{-1}],
\end{equation*}
for $j=1,2,3,4$, where $B$ is a sufficiently large parameter. Furthermore, set
\begin{equation*}
R(B)=\sum_{\stackrel{p_j\in I_j}{F(p_1,p_2,p_3,p_4)=0}}(\log p_1)(\log p_2)(\log p_3)(\log p_4).
\end{equation*}
In this section, we use the circle method to prove the following lemma.
\begin{lem}
For any $A>0$, we have
\begin{equation*}
R(B)=J(B)\mathfrak{S}+O(B(\log B)^{-A}),
\end{equation*}
where $J(B)$ is the number of solutions of
\begin{equation*}
F(m_1,m_2,m_3,m_4)=0
\end{equation*}
with $m_j\in I_j $ and
\begin{equation*}
\mathfrak{S}=\prod_{p}\Big(1+\frac{1}{(p-1)^3}\Big).
\end{equation*}
Moreover, we have $J(B)\gg B(\log B)^{-3}$ and $\mathfrak{S}\gg1$.
\end{lem}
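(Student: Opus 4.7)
\emph{Proof strategy.} The plan is to analyze $R(B)$ via the Hardy--Littlewood circle method, in close analogy with Vinogradov's approach to the ternary Goldbach problem but in a short-interval setting. I would introduce the exponential sums
\begin{equation*}
S_j(\alpha) = \sum_{p \in I_j} (\log p)\, e(\alpha p), \qquad T_j(\alpha) = \sum_{m \in I_j} e(\alpha m),
\end{equation*}
so that by orthogonality
\begin{equation*}
R(B) = \int_0^1 \prod_{j=1}^4 S_j(\beta_j \alpha)\, d\alpha, \qquad J(B) = \int_0^1 \prod_{j=1}^4 T_j(\beta_j \alpha)\, d\alpha.
\end{equation*}
I would then perform the standard Farey dissection with parameter $Q=(\log B)^C$, where $C$ is to be chosen in terms of $A$, taking major arcs $\mathfrak{M}(q,a)$ of halfwidth $QB^{-1/3}$ about each $a/q$ with $1\le q\le Q$ and $(a,q)=1$, and minor arcs $\mathfrak{m}$ as the complement in a convenient unit interval.

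On each $\mathfrak{M}(q,a)$ the Siegel--Walfisz theorem yields the approximation
\begin{equation*}
S_j(\alpha) = \frac{\mu(q)}{\varphi(q)}\, T_j(\alpha - a/q) + O\big(B^{1/3}\exp(-c\sqrt{\log B})\big),
\end{equation*}
and since $\beta_j\in\{\pm 1\}$ preserves coprimality of $\beta_j a$ with $q$, the same approximation holds for $S_j(\beta_j\alpha)$. Integrating the product over $\mathfrak{M}$, one obtains the main term
\begin{equation*}
\bigg(\sum_{q\le Q}\frac{\mu(q)^4}{\varphi(q)^3}\bigg) J(B) + O\big(B(\log B)^{-A}\big),
\end{equation*}
where the arithmetic series, completed via the usual Euler factorisation, produces the singular series $\mathfrak{S}=\prod_p(1+(p-1)^{-3})$.

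The main obstacle is the minor arc estimate. For $\alpha\in\mathfrak{m}$, Dirichlet's theorem yields a rational approximation $|\alpha - a/q|\le 1/(qX)$ with $Q<q\le X:=B^{1/3}Q^{-1}$. Vaughan's identity applied to $\Lambda$ restricted to the short interval $I_j$ of length $B^{1/3}(\log B)^{-1}$ then delivers
\begin{equation*}
\sup_{\alpha\in\mathfrak{m}} |S_j(\alpha)| \ll B^{1/3}(\log B)^{-A'}
\end{equation*}
for any prescribed $A'$, at the price of taking $C=C(A')$ suitably large. Combining this $L^\infty$ bound on two of the four factors with the Parseval estimate $\int_0^1 |S_j(\alpha)|^2\, d\alpha\ll B^{1/3}$ on the remaining two, the minor-arc integral is $\ll B(\log B)^{-2A'}$, and taking $A'\ge A/2$ closes the argument.

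Finally, each Euler factor satisfies $1+(p-1)^{-3}\ge 1$, and absolute convergence of the product follows from $\sum_p(p-1)^{-3}<\infty$, so $\mathfrak{S}\ge 1$. For $J(B)$, the equation $F(\mathbf{m})=0$ cuts out a rank-$3$ sublattice of $\mathbb{Z}^4$, and the hypothesis $F(\eta_1,\dots,\eta_4)=0$ places the point $(\eta_1,\dots,\eta_4)B^{1/3}$ at the centre of the box $\prod_j I_j$ on the corresponding hyperplane. Since each $I_j$ has length tending to infinity, a standard lattice-point count on the resulting three-dimensional slab gives $J(B)\gg (B^{1/3}(\log B)^{-1})^3 = B(\log B)^{-3}$.
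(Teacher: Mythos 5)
Your proposal follows essentially the same route as the paper: a Farey dissection with threshold a power of $\log B$, a Siegel--Walfisz type approximation of $S_j$ on the major arcs, an $L^\infty$--$L^2$ hybrid bound on the minor arcs using exponential sum estimates over primes together with Parseval, completion of the arithmetic sum to the Euler product $\prod_p(1+(p-1)^{-3})$, and a direct count for $J(B)$ by fixing three of the $m_j$ near the centres of their intervals and solving for the fourth. The only cosmetic differences are your choice of references: the paper quotes the Perelli--Pintz short-interval Siegel--Walfisz theorem on the major arcs and Zhan's short-interval exponential sum estimate on the minor arcs, whereas you invoke the classical Siegel--Walfisz theorem (which suffices here, by subtracting two full-interval counts, since the intervals $I_j$ have length $B^{1/3}(\log B)^{-1}$, only a $\log$-power shorter than full) and Vaughan's identity (which is the engine behind Zhan's result; again the short interval is long enough that subtracting two full-interval bounds loses only one power of $\log$). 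These are equivalent routes through the same argument, so there is no substantive divergence from the paper.
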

\begin{proof}
Write
\begin{equation*}
L=\log B,\qquad P=L^D, \qquad Q=B^{\frac{1}{3}}P^{-3},
\end{equation*}
where $D$ is a sufficiently large parameter to be chosen later. Furthermore, denote
\begin{equation*}
S_j(\alpha)=\sum_{p_j\in I_j}\log p_j e(\beta_jp_j\alpha ).
\end{equation*}
Then we have
\begin{equation*}
R(B)=\int_{\frac{1}{Q}}^{1+\frac{1}{Q}}S_1(\alpha)S_2(\alpha)S_3(\alpha)S_4(\alpha)d \alpha.
\end{equation*}
By Dirichlet's lemma on rational approximation, each $\alpha\in (\frac{1}{Q},1+\frac{1}{Q}]$ may be written in the form
\begin{equation*}
\alpha=\frac{a}{q}+\lambda,\quad |\lambda|<\frac{1}{qQ},
\end{equation*}
for some integers $a$, $q$ with $1\leq a\leq q\leq Q$ and $(a,q)=1$. Now we define the sets of major and minor arcs as follows:
\begin{equation*}
\mathfrak{M}=\bigcup_{q\leq P}\bigcup_{\stackrel{1\leq a\leq q}{(a,q)=1}}\Big[\frac{a}{q}-\frac{1}{qQ},\frac{a}{q}+\frac{1}{qQ}\Big], \quad \mathfrak{m}=\big(\frac{1}{Q},1+\frac{1}{Q}\big]\setminus \mathfrak{M}.
\end{equation*}
Then
\begin{equation*}
R(B)=\int_{\mathfrak{M}}S_1(\alpha)S_2(\alpha)S_3(\alpha)S_4(\alpha)d \alpha+\int_{\mathfrak{m}}S_1(\alpha)S_2(\alpha)S_3(\alpha)S_4(\alpha)d \alpha.
\end{equation*}
\subsection{The major arcs}
We first estimate the contribution of the integral over the major arcs. For any $\alpha\in\mathfrak{M}$, there exist integers $a$ and $q$ such that
\begin{equation*}
\alpha=\frac{a}{q}+\lambda,\quad 1\leq q \leq P,\quad (a,q)=1 \quad \text{and}\quad |\lambda|<\frac{1}{qQ}.
\end{equation*}
Hence
\begin{equation*}
\begin{split}
S_j(\alpha)&=\sum_{p_j\in I_j}\log p_j e\Big(\frac{\beta_jap_j}{q}\Big)e(\beta_j\lambda p_j)\\
&=\frac{1}{\phi(q)}\sum_{\chi\  \text{mod}\  q}\sum_{\stackrel{h=1}{(h,q)=1}}^{q}e\Big(\frac{\beta_jah}{q}\Big)\bar{\chi}(h)\sum_{p_j\in I_j}\log p_j\chi(p_j)e(\beta_j\lambda p_j).
\end{split}
\end{equation*}
Let
\begin{equation*}
W_j(\chi,\lambda)=\sum_{m_j\in I_j}(\Lambda(m_j)\chi(m_j)-\delta_{\chi})e(\beta_j\lambda m_j),
\end{equation*}
and
\begin{equation*}
\widehat{W}_j(\chi,\lambda)=\sum_{p_j\in I_j}\log p_j \chi(p_j)e(\beta_j\lambda p_j)-\sum_{m_j\in I_j}\delta_{\chi}e(\beta_j\lambda m_j),
\end{equation*}
where $\delta_{\chi}=1$ or $0$ according as $\chi$ is principal or not. Then we have
\begin{equation}
W_j(\chi,\lambda)-\widehat{W}_j(\chi,\lambda)=\sum_{k\geq2}\sum_{p_j^k\in I_j}\log p_j \chi(p_j^k)e(\beta_j\lambda p_j^k)\ll B^{\frac{1}{6}}L.
\end{equation}
Thus
\begin{equation}
\begin{split}
&S_j(\alpha)-\frac{\mu(q)}{\phi(q)}\sum_{m_j\in I_j}e(\beta_j\lambda m_j)\\
=&\frac{1}{\phi(q)}\sum_{\chi\,\text{mod}\,q}\sum_{\stackrel{h=1}{(h,q)=1}}^q e\Big(\frac{\beta_jah}{q}\Big)\bar{\chi}(h)\widehat{W}_j(\chi,\lambda)\\
=&\frac{1}{\phi(q)}\sum_{\chi\,\text{mod}\,q}\sum_{\stackrel{h=1}{(h,q)=1}}^q e\Big(\frac{\beta_jah}{q}\Big)\bar{\chi}(h)(\widehat{W}_j(\chi,\lambda)-W_j(\chi,\lambda))\\
&+\frac{1}{\phi(q)}\sum_{\chi\,\text{mod}\,q}\sum_{\stackrel{h=1}{(h,q)=1}}^q e\Big(\frac{\beta_jah}{q}\Big)\bar{\chi}(h)W_j(\chi,\lambda).
\end{split}
\end{equation}
The main tool here is a short intervals version of Siegel-Walfisz theorem. Using (6) in Perelli and Pintz \cite{P}, we see that
\begin{equation*}
\sum_{m_j\in I_j}\Lambda(m_j)\chi(m_j)=\sum_{m_j\in I_j}\delta_{\chi}+O(B^{\frac{1}{3}}L^{-1-5D}).
\end{equation*}
Therefore, integration by parts gives
\begin{equation*}
\begin{split}
W_j(\chi,\lambda)=&\int_{I_j}e(\beta_j\lambda u)d\big( \sum_{m_j\leq u,m_j\in I_j}(\Lambda(m_j)\chi(m_j)-\delta_{\chi})\big)\\
\ll& \big|\sum_{m_j\in I_j}(\Lambda(m_j)\chi(m_j)-\delta_{\chi})\big|\\
&+\Big|\lambda\int_{I_j}e(\beta_j\lambda u)\big( \sum_{m_j\leq u,m_j\in I_j}(\Lambda(m_j)\chi(m_j)-\delta_{\chi})\big) du\Big|\\
\ll& (1+|\lambda|B^{\frac{1}{3}}L^{-1})B^{\frac{1}{3}}L^{-1-5D}.
\end{split}
\end{equation*}
Thus we obtain
\begin{equation}
W_j(\chi,\lambda)\ll (1+|\lambda|B^{\frac{1}{3}}L^{-1})B^{\frac{1}{3}}L^{-1-5D}\ll B^{\frac{1}{3}}L^{-2-2D}.
\end{equation}
Combining (2.1), (2.2) and (2.3), we get
\begin{equation*}
S_j(\alpha)=\frac{\mu(q)}{\phi(q)}\sum_{m_j\in I_j}e(\beta_j\lambda m_j)+O(B^{\frac{1}{3}}L^{-2D}).
\end{equation*}
We have similar results for $S_2(\alpha)$, $S_3(\alpha)$ and $S_4(\alpha)$. Thus we obtain
\begin{equation}
\begin{split}
&\int_{\mathfrak{M}}S_1(\alpha)S_2(\alpha)S_3(\alpha)S_4(\alpha)d \alpha \\
&-\sum_{q\leq P}\frac{{\mu}^2(q)}{{\phi}^4(q)}\sum_{\stackrel{a=1}{(a,q)=1}}^{q}\int_{-\frac{1}{qQ}}^{\frac{1}{qQ}}\sum_{m_j\in I_j}e(\lambda F(m_1,m_2,m_3,m_4))d\lambda\\
&\ll BL^{-D}.
\end{split}
\end{equation}
Note that
\begin{equation*}
\begin{split}
&\int_{-\frac{1}{2}}^{\frac{1}{2}}\sum_{m_j\in I_j}e(\lambda F(m_1,m_2,m_3,m_4))d\lambda\\
&-\int_{-\frac{1}{qQ}}^{\frac{1}{qQ}}\sum_{m_j\in I_j}e(\lambda F(m_1,m_2,m_3,m_4))d\lambda\ll BL^{-D}.
\end{split}
\end{equation*}
Inserting this into (2.4), we obtain
\begin{equation}
\int_{\mathfrak{M}}S_1(\alpha)S_2(\alpha)S_3(\alpha)S_4(\alpha)d \alpha=J(B)\mathfrak{S}(P)+O(BL^{-D}),
\end{equation}
where
\begin{equation*}
\mathfrak{S}(P)=\sum_{q=1}^{P}\frac{{\mu(q)}^2}{{\phi(q)}^3}.
\end{equation*}
\subsection{The minor arcs}
Now we estimate the contribution of the integral over the minor arcs. For any $\alpha\in\mathfrak{m}$, there exist integers $a$ and $q$ such that
\begin{equation*}
P\leq q\leq Q,\quad (a,q)=1 \quad \text{and} \quad \Big| \alpha-\frac{a}{q}\Big|<\frac{1}{qQ}.
\end{equation*}
Using the non-trivial upper bound for the exponential sums over primes in short intervals, we get
\begin{equation*}
S_1(\alpha)\ll B^{\frac{1}{3}}(\log B)^{-A},
\end{equation*}
provided $D$ is chosen to be sufficiently large. Such a result can be found in several references, for example see Theorem 2 in Zhan \cite{Z}. Also, we have the following mean-value estimate:
\begin{equation*}
\int_{\frac{1}{Q}}^{1+\frac{1}{Q}}|S_j(\alpha)|^2 d\alpha \ll B^{\frac{1}{3}}.
\end{equation*}
By Cauchy's inequality, we obtain
\begin{equation*}
\int_{\frac{1}{Q}}^{1+\frac{1}{Q}}|S_2(\alpha)S_3(\alpha)| d\alpha \ll B^{\frac{1}{3}}.
\end{equation*}
Thus we have
\begin{equation}
\int_{\mathfrak{m}}S_1(\alpha)S_2(\alpha)S_3(\alpha)S_4(\alpha)d \alpha\ll BL^{-A}.
\end{equation}
Combining (2.5) and (2.6), we get
\begin{equation*}
R(B)=J(B)\mathfrak{S}(P)+O(BL^{-A}).
\end{equation*}
\subsection{The singular series}
For the singular series, we have
\begin{equation*}
\Big|\frac{{\mu(q)}^2}{{\phi(q)}^3}\Big|\leq \frac{1}{{\phi(q)}^3}.
\end{equation*}
Note that
\begin{equation*}
\mathfrak{S}=\prod_{p}\Big(1+\frac{1}{(p-1)^3}\Big)=\sum_{q=1}^{\infty}\frac{{\mu(q)}^2}{{\phi(q)}^3}\gg 1.
\end{equation*}
Thus we obtain
\begin{equation*}
\mathfrak{S}(P)-\mathfrak{S}\ll\sum_{q>P}\frac{1}{{\phi(q)}^3}\ll L^{-D}.
\end{equation*}
Since $J(B)\ll B(\log B)^{-3}$, then we have
\begin{equation*}
R(B)=J(B)\mathfrak{S}+O(B(\log B)^{-A}).
\end{equation*}
Now we establish the lower bound for $J(B)$. For $j=1,2,3$, we fix $m_j\in[\eta_j B^{\frac{1}{3}}-\frac{1}{3}B^{\frac{1}{3}}(\log B)^{-1},\eta_jB^{\frac{1}{3}}+\frac{1}{3}B^{\frac{1}{3}}(\log B)^{-1}]$. Recall that
\begin{equation*}
\beta_1\eta_1+\beta_2\eta_2+\beta_3\eta_3+\beta_4\eta_4=0,
\end{equation*}
thus we have
\begin{equation*}
\beta_1 m_1+\beta_2 m_2+\beta_3 m_3\in[-\beta_4\eta_4 B^{\frac{1}{3}}-B^{\frac{1}{3}}(\log B)^{-1},-\beta_4\eta_4B^{\frac{1}{3}}+B^{\frac{1}{3}}(\log B)^{-1}].
\end{equation*}
Since
\begin{equation*}
I_4=[\eta_4 B^{\frac{1}{3}}-B^{\frac{1}{3}}(\log B)^{-1},\eta_4B^{\frac{1}{3}}+B^{\frac{1}{3}}(\log B)^{-1}],
\end{equation*}
then we obtain
\begin{equation*}
J(B)\gg B(\log B)^{-3}.
\end{equation*}
Therefore we prove the lemma.
\end{proof}
\section{The universal torsor}
In this section, we use a passage to the universal torsor for Cayley's cubic surface. Details can be found in Section 2 of Heath-Brown \cite{Hb}. It is shown that the universal torsor for Cayley's cubic surface is an open subvariety in
\begin{equation*}
\mathbb{A}^{13}=\text{Spec}\mathbb{Z}[v_{12},v_{13},v_{14},y_1,y_2,y_3,y_4,z_{12},z_{13},z_{14},z_{23},z_{24},z_{34}],
\end{equation*}
defined by six equations of the form
\begin{equation*}
z_{ik}z_{il}y_j+z_{jk}z_{jl}y_i=z_{ij}v_{ij}
\end{equation*}
and three equations of the form
\begin{equation*}
v_{ij}v_{ik}=z_{il}^2y_jy_k-z_{jk}^2y_iy_l,
\end{equation*}
where
\begin{equation*}
z_{ij}=z_{ji},\quad v_{ij}=v_{ji},\quad and \quad v_{ij}=-v_{kl}.
\end{equation*}
In fact, we are not working with the full universal torsor. We need the following lemma, which is essentially a restatement of Lemma 1 in Heath-Brown \cite{Hb}.
\begin{lem}
Let $[\mathbf{x}]\in U_0$ be a primitive integral solution. Then either $\mathbf{x}$ or $-\mathbf{x}$ takes the form
\begin{equation*}
\begin{split}
&x_1=z_{12}z_{13}z_{14}y_2y_3y_4,\\
&x_2=z_{12}z_{23}z_{24}y_1y_3y_4,\\
&x_3=z_{13}z_{23}z_{34}y_1y_2y_4,\\
&x_4=z_{14}z_{24}z_{34}y_1y_2y_3,
\end{split}
\end{equation*}
with non-zero integer variables $y_i$ and positive integer variables $z_{ij}$ constrained by the conditions
\begin{equation*}
\text{gcd}(y_i,y_j)=1,
\end{equation*}
\begin{equation*}
\text{gcd}(y_i,z_{ij})=1,
\end{equation*}
\begin{equation*}
\text{gcd}(z_{ab},z_{cd})=1,
\end{equation*}
for $\{a,b\}$, $\{c,d\}$ distinct, and satisfying the equation
\begin{equation*}
z_{23}z_{24}z_{34}y_1+z_{13}z_{14}z_{34}y_2+z_{12}z_{14}z_{24}y_3+z_{12}z_{13}z_{23}y_4=0.
\end{equation*}
Moreover, none of $z_{23}z_{24}z_{34}y_1+z_{13}z_{14}z_{34}y_2$, $z_{23}z_{24}z_{34}y_1+z_{12}z_{14}z_{24}y_3$ or $z_{23}z_{24}z_{34}y_1+z_{12}z_{13}z_{23}y_4$ may vanish. Conversely, if $y_i$ and $z_{ij}$ are as above, then the vector $\mathbf{x}$ taking the above form, will be a primitive integral solution lying on $U_0$.
\end{lem}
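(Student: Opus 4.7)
The plan is to prove both directions, largely following the construction in Heath-Brown \cite{Hb}. For the converse, I would verify by direct substitution: given $y_i, z_{ij}$ satisfying the torsor equation and coprimality conditions, compute the $x_j$ via the stated formulas, observe that $\sum_i \prod_{j \neq i} x_j$ factors as $(\prod_{i<j} z_{ij})(\prod_i y_i)$ times the torsor equation and hence vanishes, use the coprimality relations to show $\gcd(x_1,\dots,x_4)=1$, and use $y_i \neq 0$, $z_{ij}>0$ together with the three non-vanishing conditions to exclude the nine lines of $S_0 \setminus U_0$.

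For the forward direction, the plan is a prime-by-prime analysis. For each prime $p$, I would first count how many indices have $p \mid x_i$. Reducing the Cayley equation modulo $p$ immediately rules out exactly one: the three terms containing that $x_i$ vanish, so $p$ must divide the fourth term $x_j x_k x_l$, contradicting $p \nmid x_j,x_k,x_l$. Primitivity rules out all four. Next, writing $e_i = v_p(x_i)$ and comparing the four term-valuations $e_2+e_3+e_4,\ e_1+e_3+e_4,\ e_1+e_2+e_4,\ e_1+e_2+e_3$, the requirement that the minimum be attained at least twice yields: (i) if $p \mid x_j, x_k$ only, then $e_j = e_k$, and I set $v_p(z_{jk})$ equal to this common value; (ii) if $p \mid x_j, x_k, x_l$ only, then after relabeling $e_j \le e_k = e_l$, and I set $v_p(y_m) := e_j$ and $v_p(z_{kl}) := e_k - e_j$, where $\{m\} = \{1,2,3,4\}\setminus\{j,k,l\}$. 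Aggregating over primes produces positive integers $z_{ij}$ and positive values $|y_i|$ with the required coprimality built in.

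The signs of $y_i$ are then determined by the four equations $\mathrm{sgn}(x_j) = \prod_{i \neq j} \mathrm{sgn}(y_i)$; the solvability of this system in $\{\pm 1\}^4$ is exact up to a global sign, which is absorbed by passing to $-\mathbf{x}$. Substituting the parametrization into $\sum_i \prod_{j \neq i} x_j = 0$ and dividing by the non-zero quantity $\prod_i x_i$ recovers the torsor equation. For the final condition, I would compute
\begin{equation*}
x_1 + x_2 = z_{12}\, y_3 y_4 \bigl(z_{13}z_{14}\, y_2 + z_{23}z_{24}\, y_1\bigr),
\end{equation*}
so $x_1+x_2 = 0$ is equivalent (after multiplying the inner factor by $z_{34}$) to the vanishing of $z_{23}z_{24}z_{34}y_1 + z_{13}z_{14}z_{34}y_2$; symmetric identities for the other two pairings $\{1,3\},\{2,4\}$ and $\{1,4\},\{2,3\}$ cover all three lines of shape $x_i+x_j=x_k+x_l=0$, and $[\mathbf{x}] \in U_0$ forces the corresponding expressions to be non-zero.

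The main obstacle is the $p$-adic bookkeeping in the forward direction, especially the case-analysis that forces two of the three valuations to coincide whenever $p$ divides exactly three of the $x_i$; beyond this, the rest is essentially a matter of spelling out Heath-Brown's construction and interpreting the non-vanishing hypotheses geometrically.
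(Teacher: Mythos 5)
The paper itself does not give a proof of this lemma: it simply states that it is ``essentially a restatement of Lemma 1 in Heath-Brown \cite{Hb}'' and defers to that source. Your proposal therefore supplies an actual argument where the paper supplies none, and in outline it is a correct reconstruction of the torsor parametrization. The key steps are all sound: (a) the observation that for a primitive solution a prime $p$ can divide exactly $0$, $2$, or $3$ of the $x_i$ (dividing exactly one contradicts the Cayley equation mod $p$, dividing all four contradicts primitivity); (b) the ultrametric fact that the minimum valuation among the four terms of $\sum_i \prod_{j\ne i} x_j = 0$ must be attained at least twice, which forces $e_j=e_k$ in the two-index case and $e_k=e_l$ (after sorting) in the three-index case; (c) the prime-by-prime assignment $v_p(z_{jk}) = e_j$ resp.\ $v_p(y_m)=e_j,\ v_p(z_{kl})=e_k-e_j$, which one checks reproduces the correct valuations of all four $x_i$ and automatically encodes the three pairwise coprimality families; and (d) the identity $x_1+x_2 = z_{12}y_3y_4\bigl(z_{13}z_{14}y_2 + z_{23}z_{24}y_1\bigr)$ linking the non-vanishing hypotheses to the lines $x_i+x_j=x_k+x_l=0$.

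Two small inaccuracies are worth fixing, though neither affects the validity of the plan. First, the factorization in the converse direction should read
\begin{equation*}
\sum_{i=1}^4 \prod_{j\ne i} x_j \;=\; \Bigl(\prod_{i<j} z_{ij}\Bigr)\Bigl(\prod_{i} y_i\Bigr)^{2}\cdot\bigl(z_{23}z_{24}z_{34}y_1 + z_{13}z_{14}z_{34}y_2 + z_{12}z_{14}z_{24}y_3 + z_{12}z_{13}z_{23}y_4\bigr),
\end{equation*}
with $\bigl(\prod y_i\bigr)^{2}$ rather than $\prod y_i$; correspondingly, in the forward direction one does not divide by $\prod_i x_i$ (which would give $\sum_i 1/x_i=0$, not the torsor form) but rather by $\bigl(\prod_{i<j} z_{ij}\bigr)\bigl(\prod_i y_i\bigr)^{2}$, or equivalently multiply $\sum_i 1/x_i$ by $\bigl(\prod_{i<j} z_{ij}\bigr)\bigl(\prod_i y_i\bigr)$. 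Second, the sign system $\mathrm{sgn}(x_j)=\prod_{i\ne j}\mathrm{sgn}(y_i)$ on $\{\pm1\}^4$ is in fact a bijection (indeed an involution, since $\sigma_i = \tau_j\tau_k\tau_l$ with $\{j,k,l\}=\{1,2,3,4\}\setminus\{i\}$), so for every sign pattern of $\mathbf{x}$ there is a \emph{unique} sign assignment for $\mathbf{y}$ compatible with $z_{ij}>0$; describing this as solvable ``up to a global sign'' understates the situation, and the need for ``$\mathbf{x}$ or $-\mathbf{x}$'' in the lemma is best regarded as a harmless hedge rather than a constraint your sign analysis produces. Apart from these cosmetic points, the proposal carries through.
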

\section{The proof of the theorem}
Recall that $[\boldsymbol{\xi}]\in U_0$. Thus we get $\xi_1\xi_2\xi_3\xi_4\neq0$, and $\xi_k+\xi_l\neq0$ for $k\neq l$. Now fix
\begin{equation*}
\eta_j=\Big|\frac{\sqrt[3]{\xi_1\xi_2\xi_3\xi_4}}{\xi_j}\Big|,\quad \beta_j=\text{sgn}\Big(\frac{\sqrt[3]{\xi_1\xi_2\xi_3\xi_4}}{\xi_j}\Big),
\end{equation*}
for $j=1,2,3,4$. Then we obtain that $\eta_1$, $\eta_2$, $\eta_3$ and $\eta_4$ are positive numbers, satisfying
\begin{equation*}
\beta_k\eta_k+\beta_l\eta_l\neq0,
\end{equation*}
for $k\neq l$, and
\begin{equation*}
\beta_1\eta_1+\beta_2\eta_2+\beta_3\eta_3+\beta_4\eta_4=0.
\end{equation*}
By Lemma 2, we obtain that for sufficiently large $B$, there exists a suitable positive constant $c$, such that there are at least $cB(\log B)^{-7}$ solutions to the equation
\begin{equation*}
\beta_1p_1+\beta_2p_2+\beta_3p_3+\beta_4p_4=0,
\end{equation*}
with $p_j\in I_j$. Among these solutions, there are at most $O(B^{\frac{2}{3}}(\log B)^{-2})$ ones with $\mu(p_1p_2p_3p_4)=0$. Thus we still have at least $c'B(\log B)^{-7}$ solutions satisfying $\mu(p_1p_2p_3p_4)\neq0$, where $c'$ is a positive constant. In Lemma 3, we fix $z_{12}=z_{13}=z_{14}=z_{23}=z_{24}=z_{34}=1$. Then the relations become
\begin{equation*}
y_1+y_2+y_3+y_4=0,
\end{equation*}
\begin{equation*}
\text{gcd}(y_i,y_j)=1,
\end{equation*}
and none if $y_1+y_2$, $y_1+y_3$ or $y_1+y_4$ may vanish. We also have
\begin{equation*}
\begin{split}
&x_1=y_2y_3y_4,\\
&x_2=y_1y_3y_4,\\
&x_3=y_1y_2y_4,\\
&x_4=y_1y_2y_3.
\end{split}
\end{equation*}
Now we set $y_j= \beta_jp_j$. Thus we obtain
\begin{equation*}
x_i=\prod\limits_{\stackrel{j=1}{j\neq i}}^{4}\beta_jp_j.
\end{equation*}
Obviously, we get $\mathbf{x}\in \mathbb{Z}^4_{\text{prim}}$, $[\mathbf{x}]\in U_0$ and $\mathbf{x}$ primitive. For $x_1$, we have
\begin{equation*}
\left|\frac{x_1}{B}-\beta_2\beta_3\beta_4\eta_2\eta_3\eta_4\right|\ll (\log B)^{-1}.
\end{equation*}
Note that $\beta_2\beta_3\beta_4\eta_2\eta_3\eta_4=\xi_1$, then
\begin{equation*}
\left|\frac{x_1}{B}-\xi_1\right|\ll (\log B)^{-1}.
\end{equation*}
Arguing similarly as above, we get
\begin{equation*}
\left|\frac{\mathbf{x}}{B}-\boldsymbol{\xi}\right|\ll (\log B)^{-1}.
\end{equation*}
Thus the proof is concluded.

%\bibliographystyle{plain}
%\bibliography{paperpaperpaperpaper}

\end{document}